\tikzstyle{box} = [rectangle, rounded corners, minimum width=2cm, minimum height=1cm,text centered, draw=white, fill=white]
\tikzstyle{arrow} = [->,>=stealth]
\begin{document}

\newcommand{\mrm}{\mathrm}
\newcommand{\mc}{\mathcal}
\newcommand{\tit}{\textit}
\newcommand{\mbf}{\mathbf}
\newcommand{\mbb}{\mathbb}
\newcommand{\msf}{\mathsf}
\newcommand{\mcr}{\mathscr}
\newcommand{\mfk}{\mathfrak}

\newcommand{\D}{\Diamond}
\newcommand{\B}{\Box}
\newcommand{\bd}{\blacklozenge}
\newcommand{\bb}{\blacksquare}
\newcommand{\cc}{\circ}
\newcommand{\bu}{\bullet}
\newcommand{\ns}{{\sim}}
\newcommand{\vd}{\vdash}
\newcommand{\hra}{\hookrightarrow}
\newcommand{\imp}{\rightarrow}
\newcommand{\pmi}{\leftarrow}
\newcommand{\bimp}{\leftrightarrow}
\newcommand{\Bimp}{\Leftrightarrow}
\newcommand{\Imp}{\Rightarrow}
\newcommand{\seq}{\Rightarrow}
\newcommand{\ua}{\uparrow}
\newcommand{\dr}{\downarrow}
\newcommand{\md}{\models}
\newcommand{\app}{\approx}
\newcommand{\ol}{\overline}
\newcommand{\mdp}{\models ^+}
\newcommand{\mdn}{\models ^-}

\newcommand{\den}[1]{\llbracket{#1}\rrbracket}
\newcommand{\tup}[1]{\langle{#1}\rangle}

\newcommand{\al}{\alpha}
\newcommand{\be}{\beta}
\newcommand{\ga}{\gamma}
\newcommand{\de}{\delta}
\newcommand{\si}{\sigma}
\newcommand{\Si}{\Sigma}
\newcommand{\Ga}{\Gamma}
\newcommand{\De}{\Delta}
\newcommand{\Th}{\Theta}
\newcommand{\ep}{\epsilon}
\newcommand{\ka}{\kappa}
\newcommand{\Ka}{\Kappa}
\newcommand{\la}{\lambda}
\newcommand{\La}{\Lambda}

\newcommand{\ntg}{\varnothing}
\newcommand{\union}{\cup}
\newcommand{\Union}{\bigcup}
\newcommand{\inter}{\cap}
\newcommand{\Inter}{\bigcap}
\newcommand{\sub}{\subseteq}

\newtheorem{fact}[definition]{Fact}

\title{On the Cut Elimination of Weak Intuitionistic Tense Logic\thanks{TBA}}
%
%
\author{Yiheng Wang\inst{1}\orcidlink{0000-0002-5975-2333} \and Yu Peng\inst{1}\orcidlink{0000-0001-6371-9862} \and Zhe Lin(\Letter)\orcidlink{0009-0000-0575-4942}\inst{2}}
\authorrunning{Y Wang and Z Lin}
%
\institute{Department of Philosophy, Institute of Logic and Cognition, Sun Yat-sen University, Guangzhou, China\\ \email{ianwang747@gmail.com}, \email{amberlogos@163.com} \and Department of Philosophy, Xiamen University, Xiamen, China\\ \email{pennyshaq@163.com}}
\maketitle  
\begin{abstract}
In this paper, we use a new method to prove cut-elimination of a weak variant of intuitionistic tense logic. This method focuses on splitting the contraction rule and cut rules. Further general theories and applications of this method shall be developed in the future.

\keywords{weak intuitionistic tense logic  \and cut-elimination \and tense logic}
\end{abstract}

\section{Introduction}\label{section1}

    Intuitionistic tense logic was introduced by Ewald in his work \cite{EW1986} in 1986. Within this framework, Ewald considered four tense operators, comprising two pairs of adjoint modalities: $(F (\D), H (\bb))$ and $(P (\bd),G (\B))$, which are integrated within intuitionistic logic. This foundational logic, denoted as $\msf{IK}_t$, was further explored by Figallo and Pelaitay in \cite{figallo2014algebraic}. They provided an algebraic axiomatization of IKt-algebras, showing its soundness and completeness within the class of IKt-algebras. The discrete duality was also shown in the same paper. It is notable that Simpson's intuitionistic modal logic \cite{SI1994} can be conservatively extended to Ewald's $\msf{IK}_t$. A sequent calculus of bi-intuitionistic tense logic has been showed in \cite{Gore2010}. Recently, some weakening of intuitionistic tense logic has been proposed and studied (cf. \cite{Liang2020OnTD,LL2019,PLL2021}). These weaknesses are beneficial attempts towards solving the open problem of decidability of intuitionistic tense logic. We follow this research line and present a new cut-elimination method about a weak variant of intuitionistic tense logic ($\msf{wIK}_t$ for short) in \cite{LL2019,PLL2021}.

    The present paper is organized as follows. Section \ref{section2} gives preliminaries about weak intuitionistic tense logic and its sequent calculus $\msf{GwIK}_{t}$. Section \ref{section3} shows the cut-elimination of $\msf{GwIK}_{t}$. Section \ref{section5} gives some concluding remarks.

\section{Preliminaries}\label{section2}

    In this section, we present some preliminaries related to the intuitionistic tense logic and its sequent calculus $\msf{GwIK}_{t}$. The corresponding soundness and completeness of $\msf{GwIK}_{t}$ with respect to the intuitionistic tense algebra or Heyting algebra with tense operators can be checked in \cite{PLL2021}.

    \begin{definition}
    The set of all formulas $\mathcal{F}$ is defined inductively as follows:
    \[
    \mathcal{F}\ni \alpha::= p\mid\top\mid\bot \mid(\alpha_1\imp\alpha_2)\mid (\alpha_1\wedge\alpha_2)\mid(\alpha_1\vee\alpha_2)\mid\D\al\mid\B\al\mid\bd\al\mid\bb\al
    \]
    where $p\in\mbf{Var}=\{p_i:i<\omega\}$, a denumerable set of variables. We use the abbreviation $\neg\alpha:=\al\imp\bot$. Formulas belong to $\mbf{Atom}=\mbf{Var}\cup\{\bot,\top\}$ are called {\em atomic}. The {\em complexity} of a formula $\al$, denoted by $c(\al)$, is defined as usual. Let $Sub(\al)$ be the set of all subformulas of $\al$. For a set of formulas $\Gamma$, let $Sub(\Gamma)=\bigcup_{\al\in\Ga} Sub(\al)$. A {\em substitution} is a homomorphism $\si:\mc{F}\to\mc{F}$.
    \end{definition}
    
    \begin{definition}
    The Hilbert-style axiomatic system $\msf{wIK}_t$ consists of the following axiom schemata and inference rules:
    
        \item[$(1)$] Axiom Schemata:
            \begin{quote}
            $\mrm{(IPC)}$: All axioms of intuitionistic propositional calculus. 
            
            $\mrm{(Dual_{\D\B})}$: $\B\neg\al\imp\neg\D\al$.
            
            $\mrm{(Dual_{\bd\bb})}$: $\bb\neg\al\imp\bd\neg\al$.
            \end{quote}
        \item[$(2)$] Inference Rules:
            \[
            \AxiomC{$\D\al\imp\be$}
            \doubleLine
            \RightLabel{$(\mrm{Adj}_{\D\bb})$}
            \UnaryInfC{$\al\imp\bb\be$}
            \DisplayProof
            \quad
            \AxiomC{$\bd\al\imp\be$}
            \doubleLine
            \RightLabel{$(\mrm{Adj}_{\bd\B})$}
            \UnaryInfC{$\al\imp\B\be$}
            \DisplayProof
            \quad
            \frac{\al\quad\al\imp\be}{\be}(\mrm{MP})
            \]
    \end{definition}

    \begin{definition}
    Let the comma, $\cc$ and $\bu$ be structural counterparts for $\land$, $\D$ and $\bd$ respectively. The set of all formula
    structures $\mathcal{FS}$ is defined inductively as follows:
    \[\mathcal{FS} \ni \Gamma ::= \alpha \mid (\Gamma_1,\Gamma_2)\mid \cc\Gamma\mid\bu\Ga
    \]
    Let $\mc{FS}^\ep=\mc{FS}\cup\{\epsilon\}$ where $\epsilon$ stands for the empty formula structure. By $f(\Gamma)$ we denote the formula obtained from $\Gamma$ by replacing all structure operators with their corresponding formula connectives. For every structure operators $\Ga\in\mc{FS}^\ep$, $f(\Ga)$ is defined inductively as follows:
    \[
    f(\ep)=\top\quad f(\al)=\al\quad f(\Ga_1,\Ga_2)=f(\Ga_1)\land f(\Ga_2)\quad f(\cc\Ga)=\D f(\Ga)\quad f(\bu\Ga)=\bd f(\Ga)
    \]
    Every nonempty formula structure $\Ga$ has a parsing tree with formulas in $\Ga$ as leaf nodes and structural operators in $\Ga$ as non-leaf nodes. A {\em sequent} is an expression of the form $\Gamma\Imp \be$ where $\Gamma\in\mc{FS}^\ep$ is a formula structure and $\be\in\mc{F}$ is a formula.
    \end{definition}

    \begin{definition}
    Let $-$ be the symbol called the {\em position}. A {\em context} $\Gamma[-]$ is a formula structure $\Gamma\in\mc{FS}^\ep$ together with a designated position $[-]$ which can be filled with a formula structure. The set of all contexts $\mc{C}$ is defined inductively as follows: 
    \[
    \mc{C}\ni\Gamma[-]::= - \mid (\Gamma_1[-],\Gamma_2) \mid (\Gamma_1,\Gamma_2[-]) \mid \circ\Gamma[-]\mid \bu\Gamma[-]
    \]
    Let $\Gamma[\Delta]$ be the formula structure obtained from context $\Gamma[-]\in\mc{C}$ by substituting $\Delta\in\mc{FS}$ for position $-$.
    \end{definition}
    
    \begin{definition}
    The sequent calculus $\msf{GwIK}_{t}$ for $\msf{wIK}_t$ consists of the following axiom schemata and sequent rules: for $i\in\{1,2\}$,
        \begin{quote}
            \item[$(1)$] Axiom schemata:
            \[
            (\mrm{Id})~\al\Imp\al
            \]
            \item[$(2)$] Logical rules:
            \[
            \frac{\Ga[\top]\seq\be}{\Ga[\De]\seq\be}{(\top)}
            \quad
            \frac{\De\seq\bot}{\Ga[\De]\seq\be}{(\bot)}
            \]
            \[
            \frac{\Gamma[\alpha_1,\alpha_2]\seq\beta}{\Gamma[\alpha_1\wedge\alpha_2]\seq\beta}{({\wedge}\mrm{L})}
            \quad
            \frac{\Gamma \seq\be_1\quad\Gamma\seq\be_2}{\Gamma\seq\be_1\wedge\be_2}{({\wedge}\mrm{R})}
            \]
            \[
            \frac{\Gamma[\al_1]\seq\beta\quad\Gamma[\al_2]\seq\beta}{\Gamma[\al_1\vee\al_2]\seq\be}{({\vee}\mrm{L})}
            \quad
            \frac{\Gamma\seq\beta_i}{\Gamma\seq\beta_1\vee\beta_2}{({\vee}\mrm{R})}
            \]
            \[
            \frac{\De\Imp\al_1\quad\Ga[\al_2]\Imp\be}{\Ga[\De,\al_1\imp\al_2]\Imp\be}{({\imp}\mrm{L})}
            \quad
            \frac{\be_1,\Ga\Imp\be_2}{\Ga\Imp\be_1\imp\be_2}{({\imp}\mrm{R})}
            \]
        \item[$(3)$] Tense rules:
            \[
            \frac{\Gamma[\circ \alpha]\seq\beta}{\Gamma[\D\alpha]\seq \beta}{(\D\mrm{L})}
            \quad
            \frac{ \Gamma\seq\be}{\circ \Gamma\seq\D\be}{(\D\mrm{R})}
            \quad
            \frac{\Gamma[\bu\alpha]\seq\beta}{\Gamma[\blacklozenge\alpha]\seq \beta}{(\bd\mrm{L})}
            \quad
            \frac{ \Gamma\seq\be}{\bu\Gamma\seq\blacklozenge\be}{(\bd\mrm{R})}
            \]
            \[
            \frac{\Gamma[\alpha]\seq \beta}{ \Gamma[\circ\bb \alpha]\seq \beta} {(\bb\mrm{L})}
            \quad
            \frac{\circ\Gamma\seq \be}{ \Gamma\seq \bb\be} {(\bb\mrm{R})}
            \quad
            \frac{\Gamma[\alpha]\seq \beta}{ \Gamma[\bu \B \alpha]\seq \beta} {(\B\mrm{L})}
            \quad
            \frac{\bu\Gamma\seq \be}{ \Gamma\seq \B\be} {(\B\mrm{R})}
            \]	
            \item[$(4)$] Structural rules:
            \[
            \frac{\Gamma[\circ\Delta_1,\circ\Delta_2]\seq \beta}{\Gamma[\circ(\Delta_1,\Delta_2)]\seq \beta}{\mrm{(\mrm{Con}_{\circ})}}
            \quad
            \frac{\Gamma[\bullet\Delta_1,\bu\Delta_2]\seq \beta}{\Gamma[\bullet(\Delta_1,\Delta_2)]\seq \beta}{\mrm{(\mrm{Con}_{\bullet})}}
            \]
            \[
            \frac{\Gamma[\alpha,\alpha]\seq \beta}{\Gamma[\alpha]\seq \beta}{(\mrm{Con_F})}
            \quad
            \frac{\Gamma[\Delta_i]\seq \beta}{\Gamma[\Delta_1,\Delta_2]\seq \beta}{\mrm{(Wk)}}
            \quad
            \frac{\Gamma[\Delta_1,\Delta_2]\seq \beta}{\Gamma[\Delta_2,\Delta_1]\seq \beta}{(\mrm{Ex})}
            \]
            \[
            \frac{\circ\Delta_1, \Delta_2\Imp \bot}{\Gamma[\Delta_1,\bu\Delta_2]\Imp \be}{(\mrm{Dual}_{\circ\bu})}
            \quad
            \frac{\bu\Delta_1,\Delta_2\Imp \bot}{\Gamma[\Delta_1, \cc\Delta_2]\Imp \be}{(\mrm{Dual}_{\bu\circ})}
            \]
        \item[$(5)$] Cut rule:
            \[
            \frac{\Delta\seq \alpha\quad \Gamma[\alpha]\seq \beta}{\Gamma[\Delta]\seq \beta}{(\mrm{Cut})}
            \]
        \end{quote}
    \end{definition}

    \begin{fact}\label{fact: admissible rules in GIKt}
        The following sequent rules are admissible in $\msf{GwIK}_{t}$: for $\flat\in\{\D,\B,\bd,\bb\}$,
        \[
        \frac{\al_1\Imp\be_1\quad\al_2\Imp\be_2}{\al_1\land\al_2\Imp\be_2\land\be_2}{(\land)}
        \quad
        \frac{\al_1\Imp\be_1\quad\al_2\Imp\be_2}{\al_1\vee\al_2\Imp\be_2\vee\be_2}{(\vee)}
        \]
        \[
        \frac{\al\Imp\al\quad\be\Imp\be}{\al\imp\be\Imp\al\imp\be}{(\imp)}
        \quad
        \frac{\Ga[\Delta,\Delta]\seq\beta}{\Ga[\Delta]\seq\beta}{(\mrm{Con})}
        \quad
        \frac{\al\Imp\be}{\flat\al\Imp\flat\be}{(\mrm{Mon})}
        \]
        \[
        \frac{\Gamma[\Delta_1,(\Delta_2,\Delta_3)]\seq \beta}{\Gamma[(\Delta_1,\Delta_2),\Delta_3]\seq \beta}{(\mrm{As_1})}
        \quad
        \frac{\Gamma[(\Delta_1,\Delta_2),\Delta_3]\seq \beta}{\Gamma[\Delta_1,(\Delta_2,\Delta_3)]\seq \beta}{(\mrm{As_2})}
        \]
        \end{fact}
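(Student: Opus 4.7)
My plan is to verify each admissibility claim by a short derivation, handling the structural rules $(\mrm{Con})$, $(\mrm{As_1})$, $(\mrm{As_2})$ together by a simultaneous induction.

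The four simpler admissibilities are immediate derivations. For $(\land)$: apply $(\mrm{Wk})$ to each premise $\al_i\Imp\be_i$ to obtain $\al_1,\al_2\Imp\be_i$ for $i=1,2$, combine via $(\land\mrm{R})$, and finish with $(\land\mrm{L})$. For $(\vee)$: apply $(\vee\mrm{R})$ to each premise and combine via $(\vee\mrm{L})$. For $(\imp)$: apply $(\imp\mrm{L})$ to the two identity axioms, producing $\al,\al\imp\be\Imp\be$, and then $(\imp\mrm{R})$ yields $\al\imp\be\Imp\al\imp\be$. For $(\mrm{Mon})$, each of the four cases uses the right-introduction followed by the left-introduction rule for the corresponding modality; for instance with $\flat=\D$, from $\al\Imp\be$ apply $(\D\mrm{R})$ to get $\cc\al\Imp\D\be$ and then $(\D\mrm{L})$ to obtain $\D\al\Imp\D\be$; the cases $\B,\bd,\bb$ are symmetric.

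For the generalized contraction $(\mrm{Con})$ I would induct on the parsing tree of $\De$. The leaf case $\De=\al$ is exactly $(\mrm{Con_F})$. For $\De=\cc\De'$, the two copies appear as $(\cc\De',\cc\De')$; one application of $(\mrm{Con}_\cc)$ collapses this to $\cc(\De',\De')$, and the induction hypothesis on $\De'$ reduces $(\De',\De')$ to $\De'$. The case $\De=\bu\De'$ uses $(\mrm{Con}_\bu)$ analogously. The associativity rules $(\mrm{As_1})$ and $(\mrm{As_2})$ would be derived from $(\mrm{Wk})$, $(\mrm{Ex})$, and the induction hypothesis for $(\mrm{Con})$ via the standard pattern: weaken to duplicate, exchange to juxtapose, and contract away the duplicate.

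The main obstacle is the comma case of $(\mrm{Con})$, where $\De=(\De_1,\De_2)$. Here the two copies must be reshuffled so that the two occurrences of $\De_1$ and of $\De_2$ form adjacent pairs, and this reshuffling across bracket boundaries requires $(\mrm{As_1})$ and $(\mrm{As_2})$, which were themselves derived using earlier cases of $(\mrm{Con})$. To resolve the mutual dependence, I would run a single induction on the total size of the structures being manipulated, establishing $(\mrm{Con})$, $(\mrm{As_1})$ and $(\mrm{As_2})$ at size $n$ using only their instances at sizes strictly smaller than $n$. Once the reshuffling is done, two applications of the induction hypothesis on $\De_1$ and $\De_2$ finish the proof.
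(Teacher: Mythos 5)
The paper states this Fact without proof, so there is nothing to compare against; judged on its own terms, your derivations of $(\land)$, $(\vee)$, $(\imp)$ and $(\mrm{Mon})$ are correct (with the small caveat that for $\flat\in\{\B,\bb\}$ the left rule must be applied \emph{before} the right rule, since $(\B\mrm{R})$ needs an antecedent of the form $\bu\Ga$), and the leaf, $\cc$ and $\bu$ cases of your induction for $(\mrm{Con})$ are fine. The genuine gap is in $(\mrm{As_1})$/$(\mrm{As_2})$. The pattern you invoke --- weaken to duplicate, exchange to juxtapose, contract away --- cannot rebracket a structure in this calculus: $(\mrm{Wk})$ only inserts a new sibling at an existing node, $(\mrm{Ex})$ only swaps the two children of a comma node, and $(\mrm{Con})$ only merges two \emph{syntactically identical} adjacent copies. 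No composition of these turns $\Ga[\De_1,(\De_2,\De_3)]$ into $\Ga[(\De_1,\De_2),\De_3]$; for instance, weakening the premise to $\Ga[((\De_1,\De_2),\De_3),(\De_1,(\De_2,\De_3))]$ leaves two non-identical siblings that $(\mrm{Con})$ cannot collapse. Since the comma case of $(\mrm{Con})$ needs exactly this reshuffling, your simultaneous induction on total size cannot be closed: the base of the mutual recursion for $(\mrm{As_i})$ is never discharged.

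The repair is easy once you notice that the Fact only claims admissibility in $\msf{GwIK}_{t}$, where $(\mrm{Cut})$ is available. For any structure $\De$ one shows (i) $\Ga[\De]\Imp\be$ yields $\Ga[f(\De)]\Imp\be$ by repeated $({\land}\mrm{L})$, $(\D\mrm{L})$, $(\bd\mrm{L})$, and (ii) $\De\Imp f(\De)$ is derivable by induction using $(\mrm{Wk})$, $({\land}\mrm{R})$, $(\D\mrm{R})$, $(\bd\mrm{R})$. Then $(\mrm{As_1})$ and $(\mrm{As_2})$ follow by two cuts through the interderivable formulas $(f(\De_1)\land f(\De_2))\land f(\De_3)$ and $f(\De_1)\land(f(\De_2)\land f(\De_3))$, and the comma case of $(\mrm{Con})$ follows by cutting with $f(\De)\Imp f(\De)\land f(\De)$ (or, after the cut-based $(\mrm{As_i})$ are in place, by your reshuffling argument). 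If a cut-free version were needed, a local rewriting will not do and one must instead run a global induction on the height of the derivation.
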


\section{Cut-elimination}\label{section3}

    In this section, we show the cut-elimination result of $\msf{GwIK}_{t}$. We split some of the original rules to avoid some undesired situations when adopting the regular methods of cut-elimination to handle $\msf{GwIK}_{t}$. Firstly, we break down the original (Con$_\mrm{F}$) into several sub-rules and restrict the (Id) so that some connectives shall be inverse. Secondly, we split the (Cut) rule into several pieces as well, which corresponds to the last step's action. Finally, by preserving the derivability of these calculi, it suffices to show the elimination of all the pieces of (Cut) rules in the last calculus to show the cut-elimination of $\msf{GwIK}_{t}$. our proof strategy can be illustrated in Figure \ref{fig: proof strategy} as follows where the dashed line denotes the result we attempt to achieve:

    \begin{figure}[ht]
        \centering
        \begin{tikzpicture}
            \node(GIKt) at (-5,0) {$\msf{GwIK}_{t}$};
            \node(GIKtd)[right of=GIKt, xshift=2.8cm]{$\msf{GwIK}_{t}^\dagger$};
            \node(GIKtdd)[right of=GIKtd, xshift=2.8cm]{$\msf{GwIK}_{t}^\ddagger$};
            \draw [<->] (GIKt) -- (GIKtd) node[midway, above] {split the (Con$_\mrm{F}$)};
            \draw [<->] (GIKtdd) -- (GIKtd) node[midway, above] {split the (Cut)};
            \draw [<->, overlay, dashed] (GIKtdd) to[out=-10,in=-170] (GIKt);
        \end{tikzpicture}
        \caption{\centering{The proof strategy of $\msf{GwIK}_{t}$'s cut-elimination}}
        \label{fig: proof strategy}
    \end{figure}
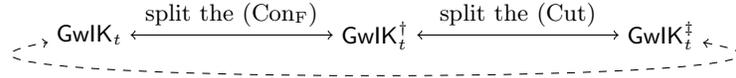

    The modified calculus $\msf{GwIK}_{t}^\dagger$ shall be obtained from $\msf{GwIK}_{t}$ as follows: the axiom schemata is restricted as (Id$_\mrm{A}$) $x\Imp x$ where $x\in\mbf{Atom}$. Further, the (Con$\mrm{_F}$) rule will be replaced by the rules below: for $x\in\mbf{Atom}$ and $\sharp\in\{\B,\bb\}$,
    \[
    \frac{\Gamma[x,x]\seq \beta}{\Gamma[x]\seq \beta}{(\mrm{Con_A})}
    \quad
    \frac{\Gamma[\sharp\al,\sharp\al]\seq \beta}{\Gamma[\sharp\al]\seq \beta}{(\mrm{Con_\sharp})}
    \quad
    \frac{\Gamma[\al_1\imp\al_2,\al_1\imp\al_2]\seq \beta}{\Gamma[\al_1\imp\al_2]\seq \beta}{(\mrm{Con_\imp})}
    \]

    \begin{lemma}\label{lem: Id admissible in GIKt^dagger}
    $\al\Imp\al$ is derivable in $\msf{GwIK}_{t}^\dagger$
    \end{lemma}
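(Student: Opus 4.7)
The plan is a routine induction on the complexity $c(\al)$ of the formula $\al$. The restricted axiom (Id$_\mrm{A}$) already gives us $\al\Imp\al$ whenever $\al\in\mbf{Atom}$, so the base case $c(\al)=0$ (which covers the cases $\al=p$, $\al=\top$, $\al=\bot$) is immediate.

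For the inductive step, I would assume $\al'\Imp\al'$ is derivable for every $\al'$ with $c(\al')<c(\al)$ and do a case analysis on the principal connective of $\al$. For $\al=\al_1\land\al_2$, applying (Wk) to each of $\al_i\Imp\al_i$ (from the induction hypothesis), then $(\land\mrm{R})$, then $(\land\mrm{L})$ yields $\al_1\land\al_2\Imp\al_1\land\al_2$. For $\al=\al_1\vee\al_2$, apply $(\vee\mrm{R})$ to each $\al_i\Imp\al_i$ and then $(\vee\mrm{L})$. For $\al=\al_1\imp\al_2$, a single $(\imp\mrm{L})$ followed by $(\imp\mrm{R})$ closes the case. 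For the modal cases $\al=\D\al_1$ and $\al=\bd\al_1$, apply the right-introduction rule (which introduces a structural $\cc$ or $\bu$) and then the corresponding left-introduction rule. For $\al=\bb\al_1$ and $\al=\B\al_1$, apply the left-introduction rule first (which inserts a structural $\cc$ or $\bu$ in the antecedent) and then the right-introduction rule.

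No step of this induction is a genuine obstacle: every rule needed is available unmodified in $\msf{GwIK}_t^\dagger$, and none of them requires any form of contraction or cut. The only thing to double-check is that the transition from (Id) to (Id$_\mrm{A}$) does not secretly invalidate a case; but since each inductive clause decomposes a compound formula into strictly less complex subformulas and invokes only the standard left/right-introduction pair for the principal connective, the argument goes through uniformly. Hence $\al\Imp\al$ is derivable in $\msf{GwIK}_t^\dagger$ for every $\al\in\mc{F}$.
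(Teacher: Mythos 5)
Your proof is correct and follows essentially the same route as the paper: induction on the complexity of $\al$, with the base case given by (Id$_\mrm{A}$) and the inductive step handled connective by connective. The only difference is that the paper packages the inductive steps as appeals to the admissible rules $(\land)$, $(\vee)$, $(\imp)$ and $(\mrm{Mon})$ of Fact~\ref{fact: admissible rules in GIKt}, whereas you unfold these into the explicit left/right-introduction (and weakening) steps that establish those very rules.
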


    \begin{proof}
    The proof proceeds by the induction on the complexity of $\al$. Clearly, the induction basis holds since one has (Id$_\mrm{A}$). The induction steps are guaranteed by ($\land$), ($\vee$), ($\imp$) and (Mon) in Fact \ref{fact: admissible rules in GIKt}.
    \end{proof}

    \begin{lemma}\label{lem: left rule inverse}
    For any $n\ge1$, the following hold in cut-free $\msf{GwIK}_{t}^\dagger$:
        \begin{quote}
            \begin{itemize}
                \item[$(1)$] If $\vd^n\Ga[\al_1\land\al_2]\Imp\be$, then $\vd^n\Ga[\al_1,\al_2]\Imp\be$.
                \item[$(2)$] If $\vd^n\Ga[\al_1\vee\al_2]\Imp\be$, then $\vd^n\Ga[\al_1]\Imp\be$ and $\vd^n\Ga[\al_2]\Imp\be$.
                \item[$(3)$] If $\vd^n\Ga[\D\al]\Imp\be$, then $\vd^n\Ga[\cc\al]\Imp\be$.
                \item[$(4)$] If $\vd^n\Ga[\bd\al]\Imp\be$, then $\vd^n\Ga[\bu\al]\Imp\be$.
            \end{itemize}
        \end{quote}
    \end{lemma}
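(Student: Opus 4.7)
The plan is to prove the four clauses (1)--(4) simultaneously by induction on the derivation height $n$ in cut-free $\msf{GwIK}_{t}^\dagger$, tracking throughout the single occurrence of the compound formula $\phi$ (which is $\al_1\land\al_2$, $\al_1\vee\al_2$, $\D\al$, or $\bd\al$ depending on the clause) sitting at the hole of the context $\Ga[-]$. The base case is vacuous: the only zero-premise rule is $(\mrm{Id_A})$ $x\Imp x$ with $x\in\mbf{Atom}$, and none of the compound $\phi$'s listed above is atomic, so no instance of $\Ga[\phi]\Imp\be$ can match this axiom.

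For the inductive step I case-split on the last rule $\rho$. If $\rho$ acts on the tracked occurrence of $\phi$---namely $({\land}\mrm{L})$ for (1), $({\vee}\mrm{L})$ for (2), $(\D\mrm{L})$ for (3), $(\bd\mrm{L})$ for (4)---then the premise(s) already have the desired form at height $n-1$ and directly supply the conclusion; for (2) the two premises of $({\vee}\mrm{L})$ yield the two required sequents separately. Otherwise $\phi$ survives undisturbed at a corresponding hole position in every premise, so I apply the induction hypothesis to each premise in order to rewrite $\phi$ into the requested substructure (for instance $(\al_1,\al_2)$ for (1), $\cc\al$ for (3)) and then reapply $\rho$ to the transformed premise(s). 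Height preservation is immediate, since the IH keeps the premise at height $\leq n-1$ and $\rho$ contributes one further step.

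The bookkeeping burden lies with rules that restructure the antecedent around the hole: the tense right rules $(\D\mrm{R})$, $(\bb\mrm{R})$, $(\bd\mrm{R})$, $(\B\mrm{R})$ (which wrap or unwrap an outer $\cc$ or $\bu$), the structural contractions $(\mrm{Con}_{\cc})$ and $(\mrm{Con}_{\bu})$, and, most delicately, the Dual rules $(\mrm{Dual}_{\cc\bu})$ and $(\mrm{Dual}_{\bu\cc})$, whose premise carries the fixed succedent $\bot$ while the conclusion's outer context $\Ga'[-]$ and succedent $\be$ are chosen freely. For each such rule I split further on whether $\phi$ sits inside $\De_1$, inside $\De_2$, or strictly outside the displayed substructure: in the first two sub-cases I apply the IH to the premise and reinstate $\rho$, and in the last the premise is untouched so I simply reselect the outer context. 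I expect the Dual cases to be the main obstacle, because there one must match the free choice of $\Ga'[-]$ and $\be$ against the given $\Ga[\phi]\Imp\be$ so that the transformed occurrence lands at the intended position in the resulting $\Ga[\psi]\Imp\be$; once this matching is made explicit, the IH application and reapplication of $\rho$ are routine.
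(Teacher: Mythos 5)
Your proposal is correct and follows essentially the same route as the paper: induction on derivation height with a case split on whether the last rule acts on the tracked occurrence, using the premise directly when it does and reapplying the rule after the induction hypothesis when it does not. The paper's write-up is terser --- it details only clause (1) and folds the rules that freshly introduce the tracked formula (such as $(\top)$, $(\bot)$ and $(\mathrm{Wk})$) into the ``principal'' case by re-instantiating the introduced structure --- but the underlying argument, including the tacit fact that the restricted contractions of $\msf{GwIK}_{t}^\dagger$ never duplicate the tracked compound formula, is the same as yours.
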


    \begin{proof}
    The proof proceeds by the induction on the height of derivation. We only show the proof of (1), others can be treated similarly.  Assume $\vd^n\Ga[\al_1\land\al_2]\Imp\be$. Let $n=1$, if the upper sequent of $\vd^1\Ga[\al_1\land\al_2]\Imp\be$ is an instance of (Id), then it must be obtained by the (Wk) rule. Thus one has $\vd^1\Ga[\al_1,\al_2]\Imp p$ by (Wk) on $p\Imp p$. 
    Assume $n>1$, and let the sequent be obtained by an arbitrary rule (R). If $\al_1\land\al_2$ is principal in (R), then (R) is ($\land$L) or ($\top$) or ($\bot$). For ($\land$L), one has $\vd^{n}\Ga[\al_1,\al_2]\Imp \be$ simply by the premise. For other cases, one has $\vd^{n}\Ga[\al_1,\al_2]\Imp \be$ by (R) on its premise. If $\al_1\land\al_2$ is not principal in (R), then one has $\vd^{n}\Ga[\al_1,\al_2]\Imp \be$ by induction hypothesis on the premise(s) and an application of rule (R).
    \end{proof}

    \begin{lemma}\label{lem: con_F admissible}
    The following rule is cut-free admissible in $\msf{GwIK}_{t}^\dagger$:
        \[
        \frac{\Gamma[\alpha,\alpha]\seq \beta}{\Gamma[\alpha]\seq \beta}{(\mrm{Con_F})}
        \]
    \end{lemma}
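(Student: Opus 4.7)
The plan is to prove cut-free admissibility of $(\mrm{Con_F})$ by induction on $c(\al)$, using Lemma \ref{lem: left rule inverse} to reduce the contracted formula to smaller subformulas (or to a structural contraction target) at each non-atomic level.

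For the base case $\al\in\mbf{Atom}$ I would apply $(\mrm{Con_A})$ directly, and for $\al=\sharp\ga$ with $\sharp\in\{\B,\bb\}$ or $\al=\al_1\imp\al_2$ I would simply invoke $(\mrm{Con_\sharp})$ or $(\mrm{Con_\imp})$ respectively, since these ``hard'' cases have been axiomatised as primitive rules in $\msf{GwIK}_t^\dagger$. This is precisely why the contraction rule was split: invertibility is not available on the right-hand side for $\bb,\B$ or for $\imp$, so we can neither peel them off nor reduce them to their subformulas, and treating them as structural primitives sidesteps the obstruction.

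The propositional cases $\land$ and $\vee$ proceed by applying Lemma \ref{lem: left rule inverse}(1)--(2) to each of the two contracted copies, yielding $\Ga[\al_1,\al_2,\al_1,\al_2]\seq\be$ in the conjunction case (then reordered via $(\mrm{Ex})$ and contracted twice by IH on the smaller formulas $\al_1$ and $\al_2$) and $\Ga[\al_1,\al_1]\seq\be$, $\Ga[\al_2,\al_2]\seq\be$ in the disjunction case (then contracted by IH and recombined by $(\vee\mrm{L})$). The modal cases $\al=\D\ga$ and $\al=\bd\ga$ are the more delicate ones: I would first apply Lemma \ref{lem: left rule inverse}(3) or (4) twice to obtain $\Ga[\cc\ga,\cc\ga]\seq\be$ or $\Ga[\bu\ga,\bu\ga]\seq\be$, then use the structural rule $(\mrm{Con}_\cc)$ or $(\mrm{Con}_\bu)$ to collapse the two structural modalities into $\Ga[\cc(\ga,\ga)]\seq\be$ or $\Ga[\bu(\ga,\ga)]\seq\be$, treat $\Ga[\cc-]$ (resp.\ $\Ga[\bu-]$) as a new context, apply the IH at the strictly smaller formula $\ga$, and finally reintroduce the logical modality via $(\D\mrm{L})$ or $(\bd\mrm{L})$.

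The main obstacle I anticipate is ensuring the modal case genuinely terminates: the key observation is that after invertibility and $(\mrm{Con}_\cc)/(\mrm{Con}_\bu)$ the two copies of $\ga$ occur as a comma-structure inside one structural modality, so the induction call is on $\ga$ with $c(\ga)<c(\D\ga)$ and the context $\Ga[\cc-]$ is a legitimate context in $\mc{C}$. A minor bookkeeping point to verify is that the invertibility lemma is stated for the original $\msf{GwIK}_t^\dagger$ without $(\mrm{Con_F})$, which is exactly what we need since the rule we are eliminating is not yet available.
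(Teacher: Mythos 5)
Your proposal is correct and follows essentially the same route as the paper: induction on $c(\al)$, with atoms handled by $(\mrm{Con_A})$, the non-invertible cases $\imp$, $\B$, $\bb$ discharged directly by the primitive rules $(\mrm{Con_\imp})$ and $(\mrm{Con_\sharp})$, and the invertible cases reduced via Lemma~\ref{lem: left rule inverse}, the induction hypothesis, and reintroduction of the connective. The paper only works out the $\land$ case explicitly; your treatment of the modal cases via $(\mrm{Con}_{\circ})$ and $(\mrm{Con}_{\bullet})$ correctly fills in the detail the paper leaves implicit.
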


    \begin{proof}
    The proof proceeds by the induction on the complexity of $\al$ i.e. $c(\al)$. Assume $c(\al)=0$, the claim holds by $(\mrm{Con_A})$. Assume $c(\al)>0$, let $\al=\al_1\land\al_2$ or $\al_1\vee\al_2$ or $\D\al'$ or $\bd\al'$. Take $\al=\al_1\land\al_2$ as an example. By Lemma \ref{lem: left rule inverse} (1), one has $\vd\Gamma[\al_1,\al_2,\al_1,\al_2]\seq \beta$. By (Ex) and induction hypothesis, one has $\vd\Gamma[\al_1,\al_2]\seq \beta$. By ($\land$L), one has $\vd\Gamma[\al_1\land\al_2]\seq \beta$. Let $\al=\al_1\imp\al_2$ or $\B\al'$ or $\bb\al'$. The claim is held by $(\mrm{Con_\imp})$ and $(\mrm{Con_\sharp})$.
    \end{proof}

    \begin{corollary}\label{coro: structure contraction admissible}
    The following rule is cut-free admissible in $\msf{GwIK}_{t}^\dagger$:
        \[
        \frac{\Gamma[\De,\De]\seq \beta}{\Gamma[\De]\seq \beta}{(\mrm{Con})}
        \]
    \end{corollary}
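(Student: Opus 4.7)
The plan is to prove the corollary by induction on the parsing tree of the structure $\Delta$, reducing everything to the formula-level contraction already established in Lemma~\ref{lem: con_F admissible}. The base case, where $\Delta$ is a single formula $\alpha$, is exactly that lemma. The nontrivial cases are the three inductive constructors for formula structures: $\Delta=(\Delta_1,\Delta_2)$, $\Delta=\cc\Delta'$, and $\Delta=\bu\Delta'$.

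For the comma case, I would start from $\Ga[(\Delta_1,\Delta_2),(\Delta_1,\Delta_2)]\Imp\be$ and use the admissibility of associativity $(\mrm{As_1}),(\mrm{As_2})$ together with $(\mrm{Ex})$ from Fact~\ref{fact: admissible rules in GIKt} to rearrange the antecedent into $\Ga[(\Delta_1,\Delta_1),(\Delta_2,\Delta_2)]\Imp\be$. The induction hypothesis applied to $\Delta_1$ inside the context $\Ga[-,(\Delta_2,\Delta_2)]$ contracts the first pair, and a second application to $\Delta_2$ inside $\Ga[\Delta_1,-]$ contracts the remaining pair, yielding $\Ga[\Delta_1,\Delta_2]\Imp\be$ as required.

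For $\Delta=\cc\Delta'$, I would apply the structural rule $(\mrm{Con}_\cc)$ to $\Ga[\cc\Delta',\cc\Delta']\Imp\be$ to merge the two $\cc$-formulas into $\Ga[\cc(\Delta',\Delta')]\Imp\be$; since $\Ga[\cc-]$ is itself a valid context by the inductive definition of $\mc{C}$, the induction hypothesis on the simpler structure $\Delta'$ then yields $\Ga[\cc\Delta']\Imp\be$. The case $\Delta=\bu\Delta'$ is symmetric, using $(\mrm{Con}_\bu)$ in place of $(\mrm{Con}_\cc)$.

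I do not anticipate a genuine obstacle here: the work lies entirely in checking that the contexts used when invoking the induction hypothesis are legitimate members of $\mc{C}$, which is immediate from their inductive definition, and that no step reintroduces a cut (it does not, since all rules invoked — the associativity/exchange admissibilities, $(\mrm{Con}_\cc)$, $(\mrm{Con}_\bu)$, and $(\mrm{Con_F})$ — are themselves cut-free admissible). The corollary is thus essentially a packaging of Lemma~\ref{lem: con_F admissible} with the two $\circ/\bullet$-contraction rules, lifted through the structural constructors.
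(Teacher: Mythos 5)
Your proposal is correct and takes essentially the same route as the paper: the paper's proof is a one-line citation of Lemma~\ref{lem: con_F admissible} together with $(\mrm{Con}_{\circ})$ and $(\mrm{Con}_{\bullet})$, and your induction on the parsing tree of $\De$ --- with the comma case discharged via $(\mrm{Ex})$, $(\mrm{As_1})$, $(\mrm{As_2})$ and the induction hypothesis --- is exactly the argument the paper leaves implicit.
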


    \begin{proof}
    It follows from Lemma \ref{lem: con_F admissible}, $(\mrm{Con}_{\circ})$ and $(\mrm{Con}_{\bu})$.
    \end{proof}

    \begin{lemma}\label{lem: GIKt iff GIKt^dagger}
    For any sequent $\Ga[\De]\Imp\be$,  $\msf{GwIK}_{t}\vd\Ga[\De]\Imp\be$ iff $\msf{GwIK}_{t}^\dagger\vd\Ga[\De]\Imp\be$.
    \end{lemma}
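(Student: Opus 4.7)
The plan is to prove both directions by straightforward induction on the height of derivation, since the two calculi share almost all rules, and the few that differ are either generalizations or restricted instances of one another. The forward direction ($\msf{GwIK}_{t}\vd\Ga[\De]\Imp\be$ implies $\msf{GwIK}_{t}^\dagger\vd\Ga[\De]\Imp\be$) requires simulating the two rules of $\msf{GwIK}_{t}$ that are absent from $\msf{GwIK}_{t}^\dagger$, namely the unrestricted axiom (Id)~$\al\Imp\al$ and the unrestricted contraction (Con$_\mrm{F}$) on an arbitrary formula $\al$. Both are handled by the earlier work in this section: the former by Lemma \ref{lem: Id admissible in GIKt^dagger}, which derives $\al\Imp\al$ in $\msf{GwIK}_{t}^\dagger$ for every formula $\al$, and the latter by Lemma \ref{lem: con_F admissible}, which shows that (Con$_\mrm{F}$) is admissible in $\msf{GwIK}_{t}^\dagger$. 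Every other rule appearing in the $\msf{GwIK}_{t}$-derivation is literally a rule of $\msf{GwIK}_{t}^\dagger$ and can be applied verbatim to the inductively translated premises.

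The backward direction ($\msf{GwIK}_{t}^\dagger\vd\Ga[\De]\Imp\be$ implies $\msf{GwIK}_{t}\vd\Ga[\De]\Imp\be$) is even more immediate. The axiom (Id$_\mrm{A}$)~$x\Imp x$ for $x\in\mbf{Atom}$ is a special case of (Id), and each of the restricted contractions (Con$_\mrm{A}$), (Con$_\sharp$), (Con$_\imp$) is a specific instance of (Con$_\mrm{F}$). All remaining rules of $\msf{GwIK}_{t}^\dagger$ coincide with rules of $\msf{GwIK}_{t}$, so an inductive relabelling of the derivation tree produces a $\msf{GwIK}_{t}$-derivation of the same end-sequent.

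I do not anticipate a genuine obstacle: the content of the equivalence has already been discharged by Lemmas \ref{lem: Id admissible in GIKt^dagger} and \ref{lem: con_F admissible}, and the present lemma is essentially a bookkeeping consequence that packages those admissibility results together with the trivial observation that $\msf{GwIK}_{t}^\dagger$'s restricted rules are instances of $\msf{GwIK}_{t}$'s. The only point to be careful about is making sure the simulation in the forward direction preserves derivability without invoking cut, but since Lemmas \ref{lem: Id admissible in GIKt^dagger} and \ref{lem: con_F admissible} are themselves established cut-free, no circularity arises.
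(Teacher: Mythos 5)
Your proposal is correct and matches the paper's own argument: one direction is discharged by the admissibility of (Id) and (Con$_\mrm{F}$) in $\msf{GwIK}_{t}^\dagger$ established in Lemmas \ref{lem: Id admissible in GIKt^dagger} and \ref{lem: con_F admissible}, and the other by observing that (Id$_\mrm{A}$), (Con$_\mrm{A}$), (Con$_\sharp$) and (Con$_\imp$) are instances of (Id) and (Con$_\mrm{F}$). Your added remark that the simulation stays cut-free is a useful point the paper leaves implicit.
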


    \begin{proof}
    For the if part,  by Lemma \ref{lem: Id admissible in GIKt^dagger} and \ref{lem: con_F admissible}, (Id) and $\mrm{(Con_F)}$ are admissible in $\msf{GwIK}_{t}^\dagger$. Thus all the axioms and rules are preserved in $\msf{GwIK}_{t}^\dagger$. For the only if part, observe that the axioms and rules replaced by $\msf{GwIK}_{t}^\dagger$ from $\msf{GwIK}_{t}$ are special cases of the original version. (Id$_\mrm{A}$) is a restricted case of (Id) and new contraction rules $(\mrm{Con_A})$, $(\mrm{Con_\sharp})$ and $(\mrm{Con_\imp})$ are part of the $\mrm{(Con_F)}$ rule. Clearly, any sequent provable in $\msf{GwIK}_{t}^\dagger$ is provable in $\msf{GwIK}_{t}$ since all the other rules remain the same.
    \end{proof}

    \begin{lemma}\label{lem: right rule inverse}
    For any $n\ge1$, the following hold in cut-free $\msf{GwIK}_{t}^\dagger$:
        \begin{quote}
            \begin{itemize}
                \item[$(1)$] If $\vd^n\Ga\Imp\be_1\imp\be_2$, then $\vd^n\be_1,\Ga\Imp\be_2$.
                \item[$(2)$] If $\vd^n\Ga\Imp\B\be$, then $\vd^n\bu\Ga\Imp\be$.
                \item[$(3)$] If $\vd^n\Ga\Imp\bb\be$, then $\vd^n\cc\Ga\Imp\be$.
            \end{itemize}
        \end{quote}
    \end{lemma}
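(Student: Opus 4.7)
The plan is to prove all three clauses simultaneously by height-preserving induction on $n$, in the style of the proof of Lemma \ref{lem: left rule inverse}. The base case ($n=1$) is handled in the same way as the inductive step below, since the only rules that can produce a non-atomic succedent such as $\be_1 \imp \be_2$, $\B\be$, or $\bb\be$ at minimal height from an axiom $(\mrm{Id_A})$ are the matching right rule itself (principal case) and $(\bot)$ applied to $\bot \Imp \bot$, and both are treated uniformly below.

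For the inductive step, let (R) be the last rule. If (R) is the matching right rule $(\imp\mrm{R})$, $(\B\mrm{R})$, or $(\bb\mrm{R})$ with the target formula principal, the premise is already the desired conclusion at height $n-1$. Any other right rule is excluded because it would introduce a different main connective in the succedent. If (R) is a rule whose conclusion succedent is freely chosen -- namely $(\bot)$, $(\mrm{Dual}_{\circ\bu})$, or $(\mrm{Dual}_{\bu\circ})$ -- we reapply (R) to the same premise(s) but with the outer context wrapped by $(\be_1,-)$, $\bu-$, or $\cc-$ and the succedent replaced by $\be_2$, $\be$, or $\be$, respectively; since the wrapped forms are still valid contexts, this reapplication preserves height. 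For every remaining rule (all left rules, $(\mrm{Ex})$, $(\mrm{Wk})$, the contraction rules $(\mrm{Con_A})$, $(\mrm{Con_\sharp})$, $(\mrm{Con_\imp})$, $(\mrm{Con_\circ})$, $(\mrm{Con_\bu})$, and $(\top)$), premise(s) and conclusion share the succedent, so we apply the inductive hypothesis to each premise whose succedent is the target formula -- in $(\imp\mrm{L})$ only the right premise qualifies, the left premise is untouched -- and then reapply (R) with the wrapped context.

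The only subtlety requiring verification is that each wrapping operation $\Ga[-] \mapsto (\be_1,\Ga[-])$, $\bu\Ga[-]$, or $\cc\Ga[-]$ yields a well-formed context into which every left or structural rule acting strictly inside $\Ga[-]$ can still be applied; both facts follow immediately from the context grammar, which explicitly allows $\bu\Ga[-]$, $\circ\Ga[-]$, and $(\Ga_1,\Ga_2[-])$ as contexts. Hence no genuinely new difficulty arises beyond the bookkeeping already handled in Lemma \ref{lem: left rule inverse}.
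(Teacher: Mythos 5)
Your proof is correct and follows essentially the same route as the paper: a height-preserving induction on the derivation, reading off the premise when the matching right rule is last, reapplying rules with freely chosen succedents (such as $(\bot)$) under the wrapped context, and pushing the induction hypothesis through all succedent-preserving rules. If anything, you are slightly more explicit than the paper in singling out $(\mrm{Dual}_{\circ\bu})$ and $(\mrm{Dual}_{\bu\circ})$ as rules that must be reapplied rather than subjected to the induction hypothesis, which the paper leaves implicit.
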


    \begin{proof}
    The proof proceeds by the induction of the height of the derivation. Take (2) as an example, others can be treated similarly. Assume $n=1$, then $\vd^1\Ga\Imp\B\be$ must be obtained by ($\bot$) on $\bot\Imp\bot$. Thus $\vd^1\Ga[\bot]\Imp\B\be$. Clearly, one has $\vd^1\bu\Ga[\bot]\Imp\be$ by ($\bot$) on $\bot\Imp\bot$. Assume $n>1$ and let the sequent be obtained by an arbitrary rule (R). If $\B\be$ is principal in (R), then (R) is ($\B$R) or ($\bot$). For ($\B$R), one has $\vd^{n}\bu\Ga\Imp\be$ simply by the premise. For ($\bot$), the claim is held by premise and ($\bot$). If $\B\be$ is not principal in (R), then the claim is held by induction hypothesis on the premise and an application of rule (R).
    \end{proof}

    Further modifications on the (Cut) rule are important before we start cut-elimination. We split the entire (Cut) rule into 4 closely related rules as follows: for $x\in\mbf{Atom}$ and $\sharp\in\{\B,\bb\}$,
    \[
    \frac{\De\Imp x\quad\Ga[x]^n\Imp\be}{\Ga[\De]^n\Imp\be}{\mrm{(Mix_A)}}
    \quad
    \frac{\De\Imp\sharp\al\quad\Ga[\sharp\al]^n\Imp\be}{\Ga[\De]^n\Imp\be}{\mrm{(Mix_\sharp)}}
    \quad
    \]
    \[
    \frac{\De\Imp\al_1\imp\al_2\quad\Ga[\al_1\imp\al_2]^n\Imp\be}{\Ga[\De]^n\Imp\be}{\mrm{(Mix_\imp)}}
    \quad
    \frac{\De\Imp\al^\ast\quad\Ga[\al^\ast]\Imp\be}{\Ga[\De]\Imp\be}{\mrm{(Cut_\ast)}}
    \]
    where the cut formula $\al^\ast$ in $\mrm{(Cut_\ast)}$ cannot be the form of cut formulas in other modified cut rules i.e. $\al^\ast\not=x\mid\sharp\al\mid\al_1\imp\al_2$ where $x\in\mbf{Atom}$. We denote these four rules {\em (Cut)-like rules} and let $\msf{GwIK}_{t}^\ddagger$ denote the calculus that is obtained from $\msf{GwIK}_{t}^\dagger$ by replacing the (Cut) rule by the above (Cut)-like rules.

    \begin{lemma}\label{lem: GIKt^dagger iff GIKt^ddagger}
    For any sequent $\Ga[\De]\Imp\be$,  $\msf{GwIK}_{t}^\dagger\vd\Ga[\De]\Imp\be$ iff $\msf{GwIK}_{t}^\ddagger\vd\Ga[\De]\Imp\be$.
    \end{lemma}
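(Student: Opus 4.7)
The plan is to prove both implications by straightforward induction on the height of derivation in the respective source calculus. The key observation is that $\msf{GwIK}_{t}^\dagger$ and $\msf{GwIK}_{t}^\ddagger$ agree on every axiom and rule except the cutting step, so all non-cut rule cases carry over verbatim in either direction; only the $(\mrm{Cut})$/$(\mrm{Cut})$-like steps require translation.

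For the \emph{if} direction, I will translate each $(\mrm{Cut})$-like rule into a derivation in $\msf{GwIK}_{t}^\dagger$. The rule $(\mrm{Cut}_\ast)$ is literally an instance of $(\mrm{Cut})$, since the latter places no restriction on the shape of the cut formula. For each of $(\mrm{Mix}_\mrm{A})$, $(\mrm{Mix}_\sharp)$, and $(\mrm{Mix}_\imp)$, which simultaneously replace $n$ distinguished occurrences of the cut formula, I will iterate $(\mrm{Cut})$ $n$ times, replacing one occurrence per application; after $n$ such iterations, every occurrence of the cut formula has been replaced by $\De$, and the desired conclusion $\Ga[\De]^n\Imp\be$ is obtained.

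For the \emph{only if} direction, each use of $(\mrm{Cut})$ with cut formula $\al$ must be matched to exactly one $(\mrm{Cut})$-like rule. I will case-analyse on the shape of $\al$: if $\al\in\mbf{Atom}$, apply $(\mrm{Mix}_\mrm{A})$ with $n=1$; if $\al=\sharp\al'$ with $\sharp\in\{\B,\bb\}$, apply $(\mrm{Mix}_\sharp)$ with $n=1$; if $\al=\al_1\imp\al_2$, apply $(\mrm{Mix}_\imp)$ with $n=1$; and in every remaining case ($\al$ a conjunction, disjunction, $\D$-formula, or $\bd$-formula), $\al$ lies outside the excluded shapes listed in the side-condition of $(\mrm{Cut}_\ast)$, so that rule applies directly to the induction-hypothesis premises.

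No step is expected to pose a genuine obstacle: the lemma is essentially structural bookkeeping. The only things to verify with care are that the partition of cut formulas induced by the four $(\mrm{Cut})$-like rules is both exhaustive (every formula shape is covered by exactly one case in the $(\mrm{Cut})\mapsto(\mrm{Cut})$-like translation) and disjoint (so that the side-condition on $(\mrm{Cut}_\ast)$ is triggered precisely in the complementary cases), and that the iterated-$(\mrm{Cut})$ simulation of an $n$-fold Mix respects the context $\Ga[-]^n$ independently of the position replaced at each step.
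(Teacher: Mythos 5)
Your proposal is correct and fills in exactly the argument the paper leaves implicit (its proof just says ``similar to Lemma~\ref{lem: GIKt iff GIKt^dagger}''): the only-if direction treats each $(\mrm{Cut})$-like rule as covering a disjoint, exhaustive class of cut formulas so that $(\mrm{Cut})$ becomes the $n=1$ instance of the appropriate rule, and the if direction simulates each $n$-fold Mix by $n$ successive applications of $(\mrm{Cut})$. This is the same approach as the paper's, just spelled out in more detail.
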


    \begin{proof}
    The proof is similar to the Lemma \ref{lem: GIKt iff GIKt^dagger}.
    \end{proof}

    Therefore, by Lemma \ref{lem: GIKt^dagger iff GIKt^ddagger}, it suffices to show the cut-elimination of $\msf{GwIK}_{t}^\ddagger$ for the cut-elimination of $\msf{GwIK}_{t}$. Note that any derivation tree in $\msf{GwIK}_{t}^\ddagger$ may contain various applications of $\mrm{(Mix_A)}$, $\mrm{(Mix_\sharp)}$, $\mrm{(Mix_\imp)}$ and $\mrm{(Cut_\ast)}$. Thus one needs to eliminate all the application of these (Cut)-like rules to obtain the cut-elimination result.

    \begin{lemma}\label{lem: bot or top (Cut_A) elimination}
    If $x$ is $\bot$ or $\top$ in applications of $\mrm{(Mix_A)}$ in any derivation, then such a derivation can be rewritten without using $\mrm{(Mix_A)}$ i.e. $\mrm{(Mix_A)}$ can be eliminated when $x$ is $\bot$ or $\top$.
    \end{lemma}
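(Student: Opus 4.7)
The plan is to show that every individual application of $\mrm{(Mix_A)}$ whose cut formula is $\bot$ or $\top$ can be rewritten away using only rules of $\msf{GwIK}_{t}^\ddagger$ other than the (Cut)-like rules; iterating this local rewrite over all such applications in a given derivation then removes them all. The driving observation is that for these two atoms, the logical rules $(\bot)$ and $(\top)$ are already strong ``substitution-style'' rules that perform, by themselves, exactly what $\mrm{(Mix_A)}$ would need to do.

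For the case $x=\bot$, I would start from the left premise $\De\Imp\bot$ alone. The rule $(\bot)$ has the shape $\frac{\De\seq\bot}{\Ga'[\De]\seq\be'}$, with both $\Ga'[-]\in\mc{C}$ and $\be'\in\mc{F}$ arbitrary. So I choose $\Ga'[-]$ to be the context obtained from $\Ga[x]^n$ by filling $n-1$ of its $x$-positions with $\De$ and leaving the last one as the hole, and take $\be'$ to be the original $\be$. A single application of $(\bot)$ to $\De\Imp\bot$ then yields $\Ga[\De]^n\Imp\be$, which is precisely the conclusion of the $\mrm{(Mix_A)}$ step; the right premise together with its whole subderivation is simply discarded. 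Dually, for $x=\top$, I would work from the right premise $\Ga[\top]^n\Imp\be$: since $(\top)$ has the shape $\frac{\Ga'[\top]\seq\be'}{\Ga'[\De']\seq\be'}$, one can apply it $n$ times in succession, each time designating one of the still-present $\top$-occurrences and substituting $\De$ for it, to arrive at $\Ga[\De]^n\Imp\be$; the left premise is then redundant.

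The only genuine work is the bookkeeping of the $n$ occurrences of $x$ inside $\Ga[x]^n$. Specifically, one must check that the ``arbitrary context'' clause of $(\bot)$ accommodates contexts that already contain copies of $\De$, and that successive applications of $(\top)$ leave the remaining $\top$-positions untouched and in the correct locations. Both properties follow immediately from the inductive definition of contexts in $\mc{C}$, so the proof requires no induction on derivation height and no cut-rank measure; the transformation is purely local and can just be applied to every instance of $\mrm{(Mix_A)}$ in the derivation whose cut formula is $\bot$ or $\top$.
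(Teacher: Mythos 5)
Your proposal is correct and matches the paper's own argument: the paper likewise discards the right premise and applies $(\bot)$ once to $\De\Imp\bot$ in the case $x=\bot$, and discards the left premise and applies $(\top)$ $n$ times to $\Ga[\top]^n\Imp\be$ in the case $x=\top$. Your extra remarks on the context bookkeeping only make explicit what the paper leaves implicit.
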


    \begin{proof}
    For any sequent $\Ga[\De]\Imp\be$, assume $\msf{GwIK}_{t}^\ddagger\vd\Ga[\De]\Imp\be$ is obtained by $\mrm{(Mix_A)}$ when $x$ is $\bot$ or $\top$. Then one has the following derivations:
    \[
    \frac{\vd\De\Imp\bot\quad\vd\Ga[\bot]^n\Imp\be}{\vd\Ga[\De]^n\Imp\be}{\mrm{(Mix_A)}}
    \quad
    \frac{\vd\De\Imp\top\quad\vd\Ga[\top]^n\Imp\be}{\vd\Ga[\De]^n\Imp\be}{\mrm{(Mix_A)}}
    \]
    Clearly, by adopting ($\bot$) and ($\top$), these derivations can be transformed as follows:
    \[
    \frac{\vd\De\Imp\bot}{\vd\Ga[\De]^n\Imp\be}{\mrm{(\bot)}}
    \quad
    \frac{\vd\Ga[\top]^n\Imp\be}{\vd\Ga[\De]^n\Imp\be}{\mrm{(\top)}^n}
    \]
    Thus the claim holds.
    \end{proof}

    \begin{theorem}[Cut-elimination]\label{thm: GIKt^ddagger CE}
    If $\msf{GwIK}_{t}^\ddagger\vd\Ga[\De]\Imp\be$, then $\Ga[\De]\Imp\be$ is derivable in $\msf{GwIK}_{t}^\ddagger$ without any applications of  $\mrm{(Mix_A)}$, $\mrm{(Mix_\sharp)}$, $\mrm{(Mix_\imp)}$ and $\mrm{(Cut_\ast)}$.
    \end{theorem}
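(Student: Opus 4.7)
The proof plan is a standard cut-elimination by double induction: the outer induction on the complexity $c(\al)$ of the cut formula of a topmost application of a (Cut)-like rule, and the inner induction on the sum of heights of its two premises. Given any such topmost application, I show it can be rewritten so that all new (Cut)-like applications have either strictly simpler cut formula or strictly shorter premises; iterating from the top of the derivation tree downward yields the theorem. The splitting of (Cut) into the four rules is what makes this go smoothly: each $\mrm{Mix}$ rule absorbs contractions $(\mrm{Con_A})$, $(\mrm{Con_\sharp})$, $(\mrm{Con_\imp})$ on its own cut formula into the multiplicity parameter $n$, while $(\mrm{Cut_\ast})$ is reserved precisely for those connectives whose left rule is invertible by Lemma \ref{lem: left rule inverse}, so the classical contraction-cut interaction and the subformula-cut reduction are cleanly separated.

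I would handle the rules in order of difficulty. For $(\mrm{Mix_A})$: by Lemma \ref{lem: bot or top (Cut_A) elimination} only $x = p \in \mbf{Var}$ remains; since $p$ is never principal in any logical or tense rule, I push the Mix upward through the right premise, with the base case $(\mrm{Id_A})\ p\Imp p$ replaced by the left premise $\De\Imp p$, and the passage through $(\mrm{Con_A})$ absorbed by adjusting $n$. For $(\mrm{Cut_\ast})$, whose cut formula is among $\land, \vee, \D, \bd$, the principal-principal case reduces to cuts on strict subformulas via Lemma \ref{lem: left rule inverse}, and non-principal cases permute upward by inner IH. For $(\mrm{Mix_\imp})$ on $\al_1\imp\al_2$ and $(\mrm{Mix_\sharp})$ on $\B\be$ or $\bb\be$, the key reduction occurs when the cut formula is principal on the right: Lemma \ref{lem: right rule inverse} inverts the left premise to a sequent on subformulas ($\al_1,\De\Imp\al_2$, $\bullet\De\Imp\be$, or $\circ\De\Imp\be$), and smaller cuts on these subformulas combine with a residual Mix on the remaining $n-1$ occurrences.

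The main obstacle is the careful bookkeeping of the $n$-occurrence substitution of the Mix rules as they are permuted above the tense structural rules, particularly the dual rules $(\mrm{Dual}_{\circ\bullet})$ and $(\mrm{Dual}_{\bullet\circ})$, whose conclusion fixes a context shape $\Ga[\De_1,\bullet\De_2]\Imp\be$ (respectively $\Ga[\De_1,\circ\De_2]\Imp\be$) arising from a premise that does not mention $\Ga[-]$ at all. If a cut-occurrence lies inside $\Ga[-]$ the Mix can simply be dropped above the rule, while if it lies inside $\De_1,\bullet\De_2$ one must verify that the occurrence cannot be principal there and that the permutation types correctly. Similarly, the handling of $(\mrm{Con_\circ})$, $(\mrm{Con_\bullet})$, $(\mrm{Con_\sharp})$ when the cut formula falls inside their scope requires invoking the admissible general structural contraction from Corollary \ref{coro: structure contraction admissible} to reconcile the duplicated copies of $\De$ produced by the permutation. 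Once these details are verified, the reductions proceed in the familiar Gentzen style and the induction closes.
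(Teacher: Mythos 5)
Your proposal is correct and follows essentially the same strategy as the paper: eliminate a topmost (Cut)-like application by a lexicographic induction on cut-formula complexity and premise height, permuting $\mrm{(Mix_A)}$ and $\mrm{(Cut_\ast)}$ up the right and left premises respectively with Lemma \ref{lem: left rule inverse} handling the principal $\land,\vee,\D,\bd$ cases, using Lemma \ref{lem: right rule inverse} to invert the left premise for the principal $\mrm{(Mix_\imp)}$ and $\mrm{(Mix_\sharp)}$ cases, and discharging duplicated copies of $\De$ via Corollary \ref{coro: structure contraction admissible}. Your additional remarks on the bookkeeping for the $(\mrm{Dual}_{\circ\bu})$, $(\mrm{Dual}_{\bu\circ})$ and structural contraction rules fill in details the paper subsumes under the generic non-principal permutation case, but do not change the argument.
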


    \begin{proof}
    Assume $\msf{GwIK}_{t}^\ddagger\vd\Ga[\De]\Imp\be$, then there is a derivation $\mc{D}$ of it. We take one of the uppermost applications of (Cut)-like rules in this derivation. Certainly, this uppermost (Cut)-like rule could be one of the $\mrm{(Mix_A)}$, $\mrm{(Mix_\sharp)}$, $\mrm{(Mix_\imp)}$ and $\mrm{(Cut_\ast)}$. Then we shall eliminate all these (Cut)-like rules. 
    
    However, when eliminating the (Cut)-like rules by decreasing the complexity of the cut formula, we may encounter some (Cut)-like rules different from the one we attempt to eliminate. Let's denote these uncertain (Cut)-like rules (Cut$_?$). Nonetheless, (Cut$_?$) is nothing new but one of the (Cut)-like rules we shall deal with. For instance, for a cut formula $\sharp\al$, we may find out that $\al=\sharp\al'$ for some $\al'$ when we decrease the complexity of the cut formula. But $\al$ could be other forms as well such as $x\in\mbf{Atom}$ or $\al_1\odot\al_2$ where $\odot\in\{\land,\vee,\imp\}$ or $\natural\al'$ where $\natural\in\{\D,\bd\}$. We are supposed to eliminate these (Cut)-like rules one by one. Thus, it suffices to consider the following Scenarios (I-IV):
    
    Scenario (I): Suppose the uppermost (Cut)-like rule is $\mrm{(Mix_A)}$. By Lemma \ref{lem: bot or top (Cut_A) elimination}, the case that $x$ is $\bot$ or $\top$ has been settled. Let $x=p$ and suppose the derivation is as follows:
    \[
    \frac{\vd^{h_1}\De\Imp p\quad\vd^{h_2}\Ga[p]^n\Imp\be}{\vd\Ga[\De]^n\Imp\be}{\mrm{(Mix_A)}}
    \]
    The proof proceeds by the induction on $h_2$. Assume $h_2=0$, then $\vd\Ga[p]^n\Imp\be$ is an instance of (Id$_A$) and $\Ga[p]^n=\be=p$. Thus the conclusion is exactly the other upper sequent. Assume $h_2>0$, let $\vd\Ga[p]^n\Imp\be$ be obtained by an arbitrary rule (R). One has the following cases:
    
    (1) Suppose $p$ is not principal in (R), then one applies the $\mrm{(Mix_A)}$ to its premise and $\vd\De\Imp p$. Then by (R) again, the height of $\mrm{(Mix_A)}$ is decreased.

    (2) Suppose $p$ is principal in (R). Consider the following subcases:

    (2.1) (R) is ($\top$). Let $n_1+n_2=n$ and $\Ga[\De]^n=\Ga[\De]^{n_1}[\Si[\De]^{n_2}]$. Suppose the derivation ends with:
        \begin{prooftree}
            \AxiomC{$\De\Imp p$}
            \AxiomC{$\Ga[p]^{n_1}[\top]\Imp\be$}
            \RightLabel{$\mrm{(\top)}$}
            \UnaryInfC{$\Ga[p]^{n_1}[\Si[p]^{n_2}]\Imp\be$}
            \RightLabel{$\mrm{(Mix_A)}$}
            \BinaryInfC{$\Ga[\De]^{n_1}[\Si[\De]^{n_2}]\Imp\be$}
        \end{prooftree}
    Then the derivation can be transformed into: 
        \begin{prooftree}
            \AxiomC{$\De\Imp p$}
            \AxiomC{$\Ga[p]^{n_1}[\top]\Imp\be$}
            \RightLabel{$\mrm{(Mix_A)}$}
            \BinaryInfC{$\Ga'[\De]^{n_1}[\top]\Imp\be$}
            \RightLabel{$\mrm{(\top)}$}
            \UnaryInfC{$\Ga[\De]^{n_1}[\Si[\De]^{n_2}]\Imp\be$}
        \end{prooftree}

    (2.2) (R) is ($\bot$). Let $n_1+n_2=n$ and $\Ga[\De]^n=\Ga[\De]^{n_1}[\Si[\De]^{n_2}]$. Suppose the derivation ends with:
        \begin{prooftree}
            \AxiomC{$\De\Imp p$}
            \AxiomC{$\Si[p]^{n_2}\Imp\bot$}
            \RightLabel{$\mrm{(\bot)}$}
            \UnaryInfC{$\Ga[p]^{n_1}[\Si[p]^{n_2}]\Imp\be$}
            \RightLabel{$\mrm{(Mix_A)}$}
            \BinaryInfC{$\Ga[\De]^{n_1}[\Si[\De]^{n_2}]\Imp\be$}
        \end{prooftree}
    Then the derivation can be transformed into: 
        \begin{prooftree}
            \AxiomC{$\De\Imp p$}
            \AxiomC{$\Si[p]^{n_2}\Imp\bot$}
            \RightLabel{$\mrm{(Mix_A)}$}
            \BinaryInfC{$\Si[\De]^{n_2}\Imp\bot$}
            \RightLabel{$\mrm{(\bot)}$}
            \UnaryInfC{$\Ga[\De]^{n_1}[\Si[\De]^{n_2}]\Imp\be$}
        \end{prooftree}

    (2.3) (R) is ($\mrm{Con_A}$). Suppose the derivation ends with:
        \begin{prooftree}
            \AxiomC{$\De\Imp p$}
            \AxiomC{$\Ga[p,p][p]^{n-1}\Imp\be$}
            \RightLabel{$\mrm{(Con_A)}$}
            \UnaryInfC{$\Ga[p][p]^{n-1}\Imp\be$}
            \RightLabel{$\mrm{(Mix_A)}$}
            \BinaryInfC{$\Ga[\De][\De]^{n-1}\Imp\be$}
        \end{prooftree}
    Then the derivation can be transformed into: 
        \begin{prooftree}
            \AxiomC{$\De\Imp p$}
            \AxiomC{$\Ga[p,p][p]^{n-1}\Imp\be$}
            \RightLabel{$\mrm{(Mix_A)}$}
            \BinaryInfC{$\Ga[\De,\De][\De]^{n-1}\Imp\be$}
            \RightLabel{$\mrm{(Con)}$ by Corollary \ref{coro: structure contraction admissible}}
            \UnaryInfC{$\Ga[\De][\De]^{n-1}\Imp\be$}
        \end{prooftree}

    (2.4) (R) is ($\mrm{Wk}$). Let $n_1+n_2=n$ and $\Ga[\De]^n=\Ga_1[\De]^{n_1},\Ga_2[\De]^{n_2}$. Suppose the derivation ends with: for $i\in\{1,2\}$,
        \begin{prooftree}
            \AxiomC{$\De\Imp p$}
            \AxiomC{$\Ga_i[p]^{n_1}\Imp\be$}
            \RightLabel{$\mrm{(Wk)}$}
            \UnaryInfC{$\Ga_1[p]^{n_1},\Ga_2[p]^{n_2}\Imp\be$}
            \RightLabel{$\mrm{(Mix_A)}$}
            \BinaryInfC{$\Ga_1[\De]^{n_1},\Ga_2[\De]^{n_2}\Imp\be$}
        \end{prooftree}
    Then the derivation can be transformed into: 
        \begin{prooftree}
            \AxiomC{$\De\Imp p$}
            \AxiomC{$\Ga_i[p]^{n_1}\Imp\be$}
            \RightLabel{$\mrm{(Mix_A)}$}
            \BinaryInfC{$\Ga_i[\De]^{n_1}\Imp\be$}
            \RightLabel{$\mrm{(Wk)}$}
            \UnaryInfC{$\Ga_1[\De]^{n_1},\Ga_2[\De]^{n_2}\Imp\be$} 
        \end{prooftree}

    Scenario (II): Suppose the uppermost (Cut)-like rule is $\mrm{(Mix_\sharp)}$. Suppose the derivation is as follows: for $\sharp\in\{\B,\bb\}$,
    \[
    \frac{\vd^{h_1}\De\Imp\sharp\al\quad\vd^{h_2}\Ga[\sharp\al]^n\Imp\be}{\vd\Ga[\De]^n\Imp\be}{\mrm{(Mix_\sharp)}}
    \]
    The proof proceeds by the induction on $h_2\ge1$ and the complexity of the cut formula i.e. $c(\sharp\al)$. Assume $h_2=1$, then $\vd\Ga[\sharp\al]^n\Imp\be$ is obtained by ($\sharp$L) or ($\top$) or ($\bot$) or (Wk). We take the premise of $\vd\Ga[\sharp\al]^n\Imp\be$ to be $\vd\top\Imp\top$ as an example, others can be treated similarly. For ($\sharp$L), we take ($\B$L) as an example, and then $\Ga[\sharp\al]^n\Imp\be$ is $\bu\B\top\Imp\top$. Thus one has $\vd\bu\De\Imp\top$ by Lemma \ref{lem: right rule inverse} (2) on the other upper sequent. For ($\top$), one has $\Ga[\sharp\al]^n\Imp\be$ is $\Si[\sharp\al]^n\Imp\top$. Thus one has $\vd\Si[\De]^n\Imp\top$ by ($\top$) on $\vd\top\Imp\top$. For (Wk), one has $\Ga[\sharp\al]^n\Imp\be$ is $\Si[\sharp\al]^n,\top\Imp\top$. Thus one has $\vd\Si[\De]^n,\top\Imp\top$ by (Wk) on $\vd\top\Imp\top$. Assume $h_2>1$, let $\vd\Ga[\sharp\al]^n\Imp\be$ be obtained by an arbitrary rule (R). One has the following cases:

    (1) Suppose $\sharp\al$ is not principal in (R), then one applies the $\mrm{(Mix_\sharp)}$ to its premise and $\vd\De\Imp\sharp\al$. Then by (R) again, the height of $\mrm{(Mix_\sharp)}$ is decreased.

    (2) Suppose $\sharp\al$ is principal in (R). Consider the following subcases:

    (2.1) (R) is ($\top$) or ($\bot$) or (Con$_\sharp$) or (Wk). The proofs are quite similar to the Case (2) of Scenario (I).

    (2.2) (R) is ($\sharp$L). Take ($\B$L) as an example. Let $\Ga[\De]^n=\Si[\bu\De][\De]^{n-1}$. Suppose the derivation ends with:
        \begin{prooftree}
            \AxiomC{$\De\Imp\B\al$}
            \AxiomC{$\Si[\al][\B\al]^{n-1}$}
            \RightLabel{$\mrm{(\B L)}$}
            \UnaryInfC{$\Si[\bu\B\al][\B\al]^{n-1}$}
            \RightLabel{$\mrm{(Mix_\sharp)}$}
            \BinaryInfC{$\Si[\bu\De][\De]^{n-1}$}
        \end{prooftree}
    By Lemma \ref{lem: right rule inverse} (2) on $\vd\De\Imp\B\al$, one has $\vd\bu\De\Imp\al$. Then the derivation can be transformed into: 
        \begin{prooftree}
            \AxiomC{$\bu\De\Imp\al$}
            \AxiomC{$\De\Imp\B\al$}
            \AxiomC{$\Si[\al][\B\al]^{n-1}$}
            \RightLabel{$\mrm{(Mix_\sharp)}$}
            \BinaryInfC{$\Si[\al][\De]^{n-1}$}
            \RightLabel{$\mrm{(Cut_?)}$}
            \BinaryInfC{$\Si[\bu\De][\De]^{n-1}$}
        \end{prooftree}
    Note that the height of $\mrm{(Mix_\sharp)}$ and the complexity of the cut formula in $\mrm{(Cut_?)}$ has been decreased.

    Scenario (III): Suppose the uppermost (Cut)-like rule is $\mrm{(Mix_\imp)}$. Suppose the derivation is as follows:
    \[
    \frac{\vd^{h_1}\De\Imp\al_1\imp\al_2\quad\vd^{h_2}\Ga[\al_1\imp\al_2]^n\Imp\be}{\vd\Ga[\De]^n\Imp\be}{\mrm{(Mix_\imp)}}
    \]
    The proof proceeds by the induction on $h_2\ge1$ and the complexity of the cut formula i.e. $c(\al_1\imp\al_2)$. Assume $h_2=1$, then $\vd\Ga[\al_1\imp\al_2]^n\Imp\be$ is obtained by ($\imp$L) or ($\top$) or ($\bot$) or (Wk). Take ($\imp$L) as an example. Then $\Ga[\al_1\imp\al_2]^n\Imp\be$ is $p,p\imp q\Imp q$. Thus one has $p,\De\Imp q$ by Lemma \ref{lem: right rule inverse} (1) on the other upper sequent. Other cases can be treated similarly to those of Scenario (II). Assume $h_2>1$, let $\vd\Ga[\al_1\imp\al_2]^n\Imp\be$ be obtained by an arbitrary rule (R). One has the following cases:

    (1) Suppose $\al_1\imp\al_2$ is not principal in (R), then one applies the $\mrm{(Mix_\imp)}$ to its premise and $\vd\De\Imp\al_1\imp\al_2$. Then by (R) again, the height of $\mrm{(Mix_\imp)}$ is decreased.

    (2) Suppose $\al_1\imp\al_2$ is principal in (R). Consider the following subcases:

    (2.1) (R) is ($\top$) or ($\bot$) or (Con$_\imp$) or (Wk). The proofs are quite similar to the Case (2) of Scenario (I).

    (2.2) (R) is ($\imp$L). Let $n_1+n_2+1=n$ and $\Ga[\De]^n=\Ga[\De]^{n_1}[\Si[\De]^{n_2},\De]$. Suppose the derivation ends with:
        \begin{prooftree}
            \AxiomC{$\De\Imp\al_1\imp\al_2$}
            \AxiomC{$\Si[\al_1\imp\al_2]^{n_2}\Imp\al_1$}
            \AxiomC{$\Ga[\al_1\imp\al_2]^{n_1}[\al_2]\Imp\be$}
            \RightLabel{$\mrm{(\imp L)}$}
            \BinaryInfC{$\Ga[\al_1\imp\al_2]^{n_1}[\Si[\al_1\imp\al_2]^{n_2},\al_1\imp\al_2]\Imp\be$}
            \RightLabel{$\mrm{(Mix_\imp)}$}
            \BinaryInfC{$\Ga[\De]^{n_1}[\Si[\De]^{n_2},\De]\Imp\be$}
        \end{prooftree}
    By Lemma \ref{lem: right rule inverse} (1) on $\vd\De\Imp\al_1\imp\al_2$, one has $\vd\al_1,\De\Imp\al_2$. Then the derivation can be transformed into the following two parts: 
        \begin{prooftree}
            \AxiomC{$\De\Imp\al_1\imp\al_2$}
            \AxiomC{$\Si[\al_1\imp\al_2]^{n_2}\Imp\al_1$}
            \RightLabel{$\mrm{(Mix_\imp)}$}
            \BinaryInfC{$\Si[\De]^{n_2}\Imp\al_1$}
            \AxiomC{$\al_1,\De\Imp\al_2$}
            \RightLabel{$\mrm{(Cut_?)}$}
            \BinaryInfC{$\Si[\De]^{n_2},\De\Imp\al_2$}
        \end{prooftree}

        \begin{tikzpicture}[overlay, remember picture]
            \draw[->, overlay] (4.8,0.4) to[out=180,in=120] (1.5,-1);
        \end{tikzpicture}

        \begin{prooftree}
            \AxiomC{$\Si[\De]^{n_2},\De\Imp\al_2$}
            \AxiomC{$\De\Imp\al_1\imp\al_2$}
            \AxiomC{$\Ga[\al_1\imp\al_2]^{n_1}[\al_2]\Imp\be$}
            \RightLabel{$\mrm{(Mix_\imp)}$}
            \BinaryInfC{$\Ga[\De]^{n_1}[\al_2]\Imp\be$}
            \RightLabel{$\mrm{(Cut_?)}$}
            \BinaryInfC{$\Ga[\De]^{n_1}[\Si[\De]^{n_2},\De]\Imp\be$}
        \end{prooftree}
    Note that the height of $\mrm{(Mix_\imp)}$ and the complexity of the cut formulas in $\mrm{(Cut_?)}$ have been decreased.

    Scenario (IV): Suppose the uppermost (Cut)-like rule is $\mrm{(Cut_\ast)}$. For $\al^\ast\not=x\mid\sharp\al\mid\al_1\imp\al_2$ where $x\in\mbf{Atom}$ and $\sharp\in\{\B,\bb\}$, suppose the derivation is as follows: 
    \[
    \frac{\vd^{h_1}\De\Imp\al^\ast\quad\vd^{h_2}\Ga[\al^\ast]\Imp\be}{\vd\Ga[\De]\Imp\be}{\mrm{(Cut_\ast)}}
    \]
    The proof proceeds by the induction on $h_1\ge1$ and the complexity of the cut formula i.e. $c(\al^\ast)$. Assume $h_1=1$, then $\vd\De\Imp\al^\ast$ could be obtained from $\vd x\Imp x$ only by ($\land$R), ($\vee$R), ($\natural$R) where $x\in\mbf{Atom}$ and $\natural\in\{\D,\bd\}$. In addition, it can be obtained by ($\bot$) on $\vd\bot\Imp\bot$ as well. Thus we take $\vd\bot\Imp\bot$ as its premise for an example. Let the rule be ($\bot$), then the conclusion of $\mrm{(Cut_\ast)}$ is $\Ga[\De[\bot]]\Imp\be$ where $\De=\De[\bot]$. One can obtain same conclusion just by ($\bot$) on $\vd\bot\Imp\bot$ without using $\mrm{(Cut_\ast)}$. Let the rule be ($\land$L), then the right premise of $\mrm{(Cut_\ast)}$ is $\vd\Ga[\bot\land \bot]\Imp\be$ where $\al^\ast=\bot\land \bot$. By Lemma \ref{lem: left rule inverse} (1), one has $\vd\Ga[\bot,\bot]\Imp\be$. Thus by (Con$_\mrm{A}$), one has $\vd\Ga[\bot]\Imp\be$, which is just the conclusion of $\mrm{(Cut_\ast)}$ since the other upper sequent is $\vd \bot\Imp \bot\land \bot$. Let the rule be ($\vee$R) or ($\natural$R), we take ($\D$L) as an example. The right premise of $\mrm{(Cut_\ast)}$ is $\vd\Ga[\D\bot]\Imp\be$ where $\al^\ast=\D\bot$. By Lemma \ref{lem: left rule inverse} (3), one has $\vd\Ga[\cc\bot]\Imp\be$, which is exactly the conclusion of the $\mrm{(Cut_\ast)}$. Assume $h_1>1$, let $\vd^{h_1}\De\Imp\al^\ast$ be obtained by an arbitrary rule (R). One has the following cases:

    (1) Suppose $\al^\ast$ is not principal in (R), then one applies the $\mrm{(Cut_\ast)}$ to its premise and $\vd\Ga[\al^\ast]\Imp\be$. Then by (R) again, the height of $\mrm{(Cut_\ast)}$ is decreased.

    (2) Suppose $\al^\ast$ is principal in (R). Consider the following subcases:

    (2.1) (R) is ($\bot$). Let $\Ga[\De]=\Ga[\De[\Si]]$. Suppose the derivation ends with:
        \begin{prooftree}
            \AxiomC{$\Si\Imp\bot$}
            \RightLabel{$\mrm{(\bot)}$}
            \UnaryInfC{$\De[\Si]\Imp\al^\ast$}
            \AxiomC{$\Ga[\al^\ast]\Imp\be$}
            \RightLabel{$\mrm{(Cut_\ast)}$}
            \BinaryInfC{$\Ga[\De[\Si]]\Imp\be$}
        \end{prooftree}
    Then the derivation can be transformed into:
        \begin{prooftree}
            \AxiomC{$\Si\Imp\bot$}
            \RightLabel{$\mrm{(\bot)}$}
            \UnaryInfC{$\Ga[\De[\Si]]\Imp\be$}
        \end{prooftree}

    (2.2) (R) is ($\land$R). Suppose the derivation ends with:
        \begin{prooftree}
            \AxiomC{$\De\Imp\al_1$}
            \AxiomC{$\De\Imp\al_2$}
            \RightLabel{$\mrm{(\land R)}$}
            \BinaryInfC{$\De\Imp\al_1\land\al_2$}
            \AxiomC{$\Ga[\al_1\land\al_2]\Imp\be$}
            \RightLabel{$\mrm{(Cut_\ast)}$}
            \BinaryInfC{$\Ga[\De]\Imp\be$}
        \end{prooftree}
    By Lemma \ref{lem: left rule inverse} (1) on $\vd\Ga[\al_1\land\al_2]\Imp\be$, one has $\vd\Ga[\al_1,\al_2]\Imp\be$. Then the derivation can be transformed into:
        \begin{prooftree}
            \AxiomC{$\De\Imp\al_2$}
            \AxiomC{$\De\Imp\al_1$}
            \AxiomC{$\Ga[\al_1,\al_2]\Imp\be$}
            \RightLabel{$\mrm{(Cut_?)}$}
            \BinaryInfC{$\Ga[\De,\al_2]\Imp\be$}
            \RightLabel{$\mrm{(Cut_?)}$}
            \BinaryInfC{$\Ga[\De,\De]\Imp\be$}
            \RightLabel{$\mrm{(Con)}$ by Corollary \ref{coro: structure contraction admissible}}
            \UnaryInfC{$\Ga[\De]\Imp\be$}
        \end{prooftree}
    Note that the complexity of the cut formulas in $\mrm{(Cut_?)}$ has been decreased.

    (2.3) (R) is ($\vee$R) or ($\natural$L) where $\natural\in\{\D,\bd\}$. We take ($\vee$R) as an example, others can be treated similarly. Suppose the derivation ends with: for $i\in\{1,2\}$,
        \begin{prooftree}
            \AxiomC{$\De\Imp\al_i$}
            \RightLabel{$\mrm{(\vee R)}$}
            \UnaryInfC{$\De\Imp\al_1\vee\al_2$}
            \AxiomC{$\Ga[\al_1\vee\al_2]\Imp\be$}
            \RightLabel{$\mrm{(Cut_\ast)}$}
            \BinaryInfC{$\Ga[\De]\Imp\be$}
        \end{prooftree}
    By Lemma \ref{lem: left rule inverse} (2) on $\vd\Ga[\al_1\vee\al_2]\Imp\be$, one has $\vd\Ga[\al_i]\Imp\be$. Then the derivation can be transformed into:
        \begin{prooftree}
            \AxiomC{$\De\Imp\al_i$}
            \AxiomC{$\Ga[\al_i]\Imp\be$}
            \RightLabel{$\mrm{(Cut_?)}$}
            \BinaryInfC{$\Ga[\De]\Imp\be$}
        \end{prooftree}
    Note that the complexity of the cut formulas in $\mrm{(Cut_?)}$ has been decreased.
    \end{proof}

    \begin{theorem}[Cut-elimination]
    $\msf{GwIK}t$ has cut-elimination result.
    \end{theorem}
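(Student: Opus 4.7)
The plan is to chain the two equivalences in Lemmas \ref{lem: GIKt iff GIKt^dagger} and \ref{lem: GIKt^dagger iff GIKt^ddagger} with the cut-elimination result for $\msf{GwIK}_t^\ddagger$ established in Theorem \ref{thm: GIKt^ddagger CE}. Given any sequent $\Ga[\De]\Imp\be$ provable in $\msf{GwIK}_t$, Lemma \ref{lem: GIKt iff GIKt^dagger} yields $\msf{GwIK}_t^\dagger\vd\Ga[\De]\Imp\be$, Lemma \ref{lem: GIKt^dagger iff GIKt^ddagger} yields $\msf{GwIK}_t^\ddagger\vd\Ga[\De]\Imp\be$, and Theorem \ref{thm: GIKt^ddagger CE} then supplies a derivation in $\msf{GwIK}_t^\ddagger$ free of any application of $\mrm{(Mix_A)}$, $\mrm{(Mix_\sharp)}$, $\mrm{(Mix_\imp)}$ and $\mrm{(Cut_\ast)}$.

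The remaining step is to transport this cut-free $\msf{GwIK}_t^\ddagger$-derivation back to a cut-free $\msf{GwIK}_t$-derivation. Here the \emph{only if} direction of Lemma \ref{lem: GIKt iff GIKt^dagger} provides the key observation: (Id$_\mrm{A}$) is the restriction of (Id) to atoms, and each of $\mrm{(Con_A)}$, $\mrm{(Con_\sharp)}$ and $\mrm{(Con_\imp)}$ is a particular instance of $\mrm{(Con_F)}$, while every other logical, tense and structural rule of $\msf{GwIK}_t^\ddagger$ coincides with the corresponding rule of $\msf{GwIK}_t$. Consequently, any cut-free $\msf{GwIK}_t^\ddagger$-derivation can be reread directly as a $\msf{GwIK}_t$-derivation that uses no application of $\mrm{(Cut)}$, delivering the conclusion.

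The substantive content of the argument is entirely located in Theorem \ref{thm: GIKt^ddagger CE}, so no essential obstacle arises here; the statement merely packages the round-trip between the three calculi. The one matter worth checking en route is that the cut-free derivation produced by Theorem \ref{thm: GIKt^ddagger CE} occasionally invokes the structural $\mrm{(Con)}$ via Corollary \ref{coro: structure contraction admissible}, but since that corollary is itself established cut-freely from Lemma \ref{lem: con_F admissible}, its expansion introduces only primitive cut-free rules, and the final translation back to $\msf{GwIK}_t$ therefore preserves cut-freeness throughout.
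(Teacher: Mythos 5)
Your proof is correct and follows essentially the same route as the paper, which simply cites Lemma \ref{lem: GIKt iff GIKt^dagger}, Lemma \ref{lem: GIKt^dagger iff GIKt^ddagger} and Theorem \ref{thm: GIKt^ddagger CE}. Your additional care in checking that the cut-free $\msf{GwIK}_{t}^\ddagger$-derivation translates back to a cut-free $\msf{GwIK}_{t}$-derivation (since the restricted axiom and split contraction rules are instances of the originals, and Corollary \ref{coro: structure contraction admissible} is established cut-freely) is a worthwhile elaboration of a point the paper leaves implicit.
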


    \begin{proof}
    It follows from Lemma \ref{lem: GIKt iff GIKt^dagger}, Lemma \ref{lem: GIKt^dagger iff GIKt^ddagger} and Theorem \ref{thm: GIKt^ddagger CE}.
    \end{proof}

\section{Concluding Remarks}\label{section5}

    The key parts of adopting this method to prove cut-elimination are combinations of rules' invertibility and their corresponding mix rules. The process of cut-elimination can be simplified if the requirement of height-preserving invertibility is removed. Clearly, this method can be applied to various other logics and a general method shall be developed.

%
%
%
\bibliographystyle{splncs04}
\bibliography{reference.bib}

\begin{thebibliography}{1}
\providecommand{\url}[1]{\texttt{#1}}
\providecommand{\urlprefix}{URL }
\providecommand{\doi}[1]{https://doi.org/#1}

\bibitem{EW1986}
Ewald, W.B.: Intuitionistic tense and modal logic. The Journal of Symbolic
  Logic  \textbf{51}(1),  166--179 (1986)

\bibitem{figallo2014algebraic}
Figallo, A.V., Pelaitay, G.: An algebraic axiomatization of the {E}wald’s
  intuitionistic tense logic. Soft Computing  \textbf{18}(10),  1873--1883
  (2014)

\bibitem{Gore2010}
Gor\'e, R., Postniece, L., Tiu, A.: Cut-elimination and proof search for
  bi-intuitionistic tense logic. In: Goranko, V., Shehtman, V. (eds.) Advances
  in Modal Logic, vol.~8, pp. 156--177. CSLI Publications (2010)

\bibitem{Liang2020OnTD}
Liang, F., Lin, Z.: On the decidability of intuitionistic tense logic without
  disjunction. In: Proceedings of the Twenty-Ninth International Conference on
  International Joint Conferences on Artificial Intelligence. pp. 1798--1804
  (2021)

\bibitem{LL2019}
Lin, K., Lin, Z.: The sequent systems and algebraic semantics of intuitionistic
  tense logics. In: International Workshop on Logic, Rationality and
  Interaction. Lecture Notes in Computer Science, vol. 11813, pp. 140--152
  (2019)

\bibitem{PLL2021}
Peng, Y., Lin, Z., Liang, F.: On the finite model property of weak
  intuitionistic tense logic. In: Logic, Rationality, and Interaction: 8th
  International Workshop. Lecture Notes in Computer Science, vol. 13039, pp.
  174--182 (2021)

\bibitem{SI1994}
Simpson, A.K.: The proof theory and semantics of intuitionistic modal logic.
  Ph.D. thesis, University of Edinburgh, UK (1994)

\end{thebibliography}

\end{document}